\theoremstyle{plain}
 \newtheorem{theorem}{\bf{Theorem}}[section]
 \newtheorem{lemma}{\bf{Lemma}}[section]
 \newtheorem{corollary}{\bf{Corollary}}[section]
\theoremstyle{definition}
\theoremstyle{remark}
 \numberwithin{equation}{section}
\renewcommand{\leq}{\leqslant}
\renewcommand{\geq}{\geqslant}
\title[Weighted composition operators]
{Weighted composition, Volterra and Integral operators on Hardy Zygmund-type spaces}
\subjclass[2010]{Primary 47B38; Secondary 47B33, 46E15.}
\keywords{derivative Hardy spaces, bounded operator, compact operator, weighted composition operators.}
\author[Hassanlou and Abbasi
]{\textsc{Mostafa Hassanlou$^{1*}$
 and Ebrahim Abbasi$^{2}$
 } \\
\textit{\footnotesize $^1$Engineering Faculty of Khoy, Urmia University of Technology, Urmia, Iran\\
m.hassanlou@urmia.ac.ir\\
$^2$Department of Mathematics, Mahabad Branch, Islamic Azad University, Mahabad, Iran\\
ebrahimabbasi81@gmail.com
}}
\thanks{$^*$Corresponding author}
\begin{document}

\vspace{18mm}
\setcounter{page}{1}
\thispagestyle{empty}

\begin{abstract}
In this paper, we investigate weighted composition, Volterra and Integral operators on second derivative Hardy spaces. Some equivalent conditions
for boundedness of the operators will be given using the boundedness on the Hardy spaces.  Also we give a criteria for
compactness of weighted composition operators.
\end{abstract}

\maketitle
\section{\sc\bf Introduction}
Let $\mathbb{D}$ be the open unit disc in the complex plan $\mathbb{C}$ and $H(\mathbb{D})$ be the space of analytic function on $\mathbb{D}$. For
$\psi, \varphi \in H(\mathbb{D})$ with $\varphi(\mathbb{D}) \subset \mathbb{D}$ the weighted composition operators  $W_{\varphi, \psi}$
defined by
$$ W_{\varphi, \psi} f (z) = \psi(z) f(\varphi(z)), \ \ \ \  f \in  H(\mathbb{D}), z \in \mathbb{D},$$
which is a generalization of the well-known composition operators $C_{\varphi}$ and multiplication operators $M_{\psi}$. For $g \in H(\mathbb{D})$, the Volterra type operator $T_g$ is defined by
$$ T_{g} f(z) = \int_{0}^{z} f(w) g'(w) dw, \ \ \ \ f \in H(\mathbb{D}).  $$
Also the integral operator $I_g$ is defined as follows
$$ I_g f(z) = \int_{0}^{z}  f'(w) g(w) dw.$$
The space of bounded analytic functions on $\mathbb{D}$ is denoted by $H^\infty$ which is a Banach space with the norm $\|f\|_\infty=\sup_{z\in\mathbb{D}}|f(z)|.$
For $1\leq p<\infty,$ the Hardy space $H^p$ consists of all analytic functions $f\in H(\mathbb{D})$ for which
\[
 \|f\|_{H^p}^p= \sup_{0< r < 1} M_r(f,p)=\sup_{0< r < 1}\frac{1}{2\pi}\int_{0}^{2\pi}|f(re^{i \theta})|^p d\theta<\infty.
\]
These spaces are Banach with the norm $\|\cdot\|_{H^p}$.

 Let $1\leq p<\infty$. We denote by $S_2^p$, the  space of analytic functions on $\mathbb{D}$ such that   second derivative is in the Hardy space. So,
$$S_2^p = \{ f \in H(\mathbb{D}): f'' \in H^p  \}.$$
We define the norm on $S_2^p$ as follows
\[
\|f\|_{S_2^p}= |f(0)| + |f'(0)|+\|f''\|_{H^p},
\]
and equipped with this norm, $S_2^p$ is a Banach space. These spaces may be called Hardy Zygmund-type spaces.
In particular,  $S_1^p= S^p$, the space of analytic function with derivative in Hardy space, which is
investigated along with weighted composition operators and Volterra operators in \cite{con2, cai, mac1, roan}.

$S^p$ is a Banach space with the norm $\|f\|_{S^p}=|f(0)|+\|f'\|_{H^p}$. It is well known by Theorem 3.11 in \cite{duren} that, if $f \in S^1$, then $f$ extends continuously to $\overline{\mathbb{D}}$. Thus, the functions in $S^p$ belong to the disc algebra $A$ (the space of analytic
functions on $\mathbb{D}$ and continuous on $\overline{\mathbb{D}}$ endowed with the norm $||f||_A = \sup_{z \in \mathbb{D}} |f(z)|$).

It can be proved that $S_2^p \subset S^p \subset H^p$ and there exists a positive constant $C_p$ such that
$$||f||_{S^p} \leq C_p ||f||_{S_2^p}, \ \ \ \ f \in S_2^p.$$
For $1 \leq p \leq \infty $ and $f \in H^p$ we have
\[
|f(z)|\leq \frac{\|f\|_{H^p}}{(1-|z|^2)^{1/p}}.
\]
If $f \in S_2^p$ then there exists a positive constant $C$ such that
\begin{equation}\label{r21}
  |f^{(k)}(z)| \leq C \frac{\|f\|_{S_2^p}}{(1-|z|^2)^{k -1 + 1/p}},
\end{equation}
where $k=0, 1, 2$.

Roan in \cite{roan} characterized bounded, compact and isometric composition operators on $S^p$. MacCluer investigated composition operators on the space
$S^p$ in terms of Carleson measure, \cite{mac1}. A characterization for boundedenss, (weak) compactness and complete continuity of weighted composition
operators can be found in   \cite{con2}. Composition and multiplication operators with some different norms on $S^2$ were studied in \cite{cai}.  Also Volterra  type operators on $S^p$ spaces studied by authors of \cite{qin}.

In this paper, noting that the spaces introduced here is not known in the literature, we characterize boundedness of  weighted composition on
$S_2^p$ in terms of such operators on $H^p$ spaces. Also compact weighted composition operators will be studied.
Furthermore, we have a brief investigation on the bounded integral type operators on the $S_2^p$ spaces.

All positive constants are denoted by $C$ which may be varied from one place to another.
\section{\sc\bf Boundedness of $W_{\varphi, \psi} :   S_2^p \rightarrow S_2^p$ }
In this section, some conditions for boundedenss of weighted composition operators form $S_2^p$ into $S_2^p$ or $H^p$ will be given.
Since
$$(W_{\varphi, \psi}f)'' = \psi'' f (\varphi) + (2 \psi' \varphi' + \psi \varphi'') f'(\varphi) + \psi \varphi'^2 f''(\varphi) $$
then the study of $W_{\varphi, \psi} $ on $S_2^p$ spaces is related to the study of the  following operators
\begin{align*}
 W_{\varphi, \psi''} : &  S_2^p \rightarrow H^p  \\
 W_{\varphi, 2 \psi' \varphi' + \psi \varphi''} : &  S^p \rightarrow H^p \\
 W_{\varphi, \psi \varphi'^2} : &  H^p \rightarrow H^p.
\end{align*}
Although the above mentioned operators can be used but without loss of generality we replace the operators from $S^p$ or $S_2^p$ by the operators
on $H^p$ spaces.
\begin{theorem}\label{th22} \cite{cuk1}
  Let $\psi$ be an analytic function on $\mathbb{D}$ and $\varphi$ be an analytic
self-map of $\mathbb{D}$. Let $0 < p \leq q < \infty$. Then the weighted composition operator $W_{\varphi, \psi}$ is
bounded from $H^p$ into $H^q$ if and only if
$$\sup_{a\in \mathbb{D}} \int_{\partial \mathbb{D}} \left ( \frac{1-|a|^2}{|1-\overline{a} \varphi (w)|^2} \right )^{q/p} |\psi(w)|^q d\sigma (w) < \infty,$$
where $\partial \mathbb{D}$ is the unit circle and $d \sigma$ is the normalized arc length measure on $\partial \mathbb{D}$.
\end{theorem}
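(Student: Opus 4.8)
The plan is to translate boundedness of $W_{\varphi,\psi}$ into a Carleson-measure condition and then invoke Duren's extension of the Carleson embedding theorem. Since functions in $H^q$ are determined by their boundary values, one has
$$\|W_{\varphi,\psi}f\|_{H^q}^q = \int_{\partial\mathbb{D}} |\psi(w)|^q\,|f(\varphi(w))|^q\, d\sigma(w).$$
Pushing the measure $|\psi|^q\,d\sigma$ forward by the boundary map $\varphi$ produces a positive Borel measure $\mu$ on $\overline{\mathbb{D}}$, and the identity above becomes $\|W_{\varphi,\psi}f\|_{H^q}^q = \int_{\overline{\mathbb{D}}} |f|^q\, d\mu$. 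Thus $W_{\varphi,\psi}\colon H^p\to H^q$ is bounded exactly when the embedding $H^p\hookrightarrow L^q(\mu)$ is bounded, and the whole problem is reduced to characterizing this embedding.

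For necessity I would test against the normalized kernels
$$f_a(z)=\frac{(1-|a|^2)^{1/p}}{(1-\overline{a}z)^{2/p}}, \qquad a\in\mathbb{D},$$
which a direct computation shows satisfy $\|f_a\|_{H^p}\asymp 1$ uniformly in $a$. Since $|f_a(z)|^q = \bigl((1-|a|^2)/|1-\overline{a}z|^2\bigr)^{q/p}$, substituting $f_a$ into the reformulated norm reproduces exactly the integral in the statement, and boundedness of $W_{\varphi,\psi}$ forces the supremum over $a$ to be finite. This is the routine direction.

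The substance is sufficiency. Assuming the supremum $M$ in the statement is finite, I would first show that $\mu$ is a $(q/p)$-Carleson measure, i.e. $\mu(S(I))\lesssim |I|^{q/p}$ for every boundary arc $I$. Given an arc $I$ of length $h$ centred at $\zeta\in\partial\mathbb{D}$, the choice $a=(1-h)\zeta$ makes $1-|a|^2\asymp h$ and $|1-\overline{a}z|\asymp h$ throughout the Carleson box $S(I)$, so the Poisson-type kernel $\frac{1-|a|^2}{|1-\overline{a}z|^2}$ is comparable to $1/h$ there; restricting $\int_{\overline{\mathbb{D}}} \bigl(\frac{1-|a|^2}{|1-\overline{a}z|^2}\bigr)^{q/p} d\mu$ to $S(I)$ and using $\sup_a(\cdots)=M$ then yields $\mu(S(I))\lesssim M\,h^{q/p}$. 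Once the box condition is in hand, Duren's generalization of Carleson's theorem (valid in the range $0<p\le q$, see \cite{duren}) gives $\int_{\overline{\mathbb{D}}}|f|^q\,d\mu\lesssim \|f\|_{H^p}^q$, which is precisely the desired boundedness.

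The main obstacle is twofold. First, one must justify the Carleson embedding in the range $0<p\le q$ with the correct exponent $q/p$ on the box, rather than merely the classical case $p=q$; this is the analytic heart of the argument. Second, some care is needed because the pushforward measure $\mu$ may place mass on $\partial\mathbb{D}$ at points where $\varphi$ has unimodular radial limits, so one has to verify that both the box estimates and the embedding theorem remain valid for measures on the closed disc, interpreting $|f|$ through its boundary values where necessary.
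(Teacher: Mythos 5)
The first thing to note is that the paper contains no proof of this statement at all: Theorem~\ref{th22} is quoted as a known result from \cite{cuk1}, so there is no internal argument to compare against, and your proposal can only be judged against the standard proof in that source. Your outline is precisely that standard argument: rewrite $\|W_{\varphi,\psi}f\|_{H^q}^q$ as $\int_{\overline{\mathbb{D}}}|f|^q\,d\mu$ for the pushforward measure $\mu$ of $|\psi|^q\,d\sigma$ under the boundary map of $\varphi$, get necessity by testing on the kernels $f_a$ (whose $H^p$ norm equals $1$ exactly, since $|f_a|^p$ restricted to $\partial\mathbb{D}$ is the Poisson kernel at $a$), and get sufficiency by extracting the box estimate $\mu(S(I))\lesssim M|I|^{q/p}$ from the choice $a=(1-h)\zeta$ and invoking Duren's extension of Carleson's embedding theorem for $0<p\le q$, taking care that $\mu$ may charge $\partial\mathbb{D}$. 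The structure, the test functions, and the two technical burdens you single out are all the right ones.

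There is, however, one genuine gap, and it sits in your very first display. The identity $\|W_{\varphi,\psi}f\|_{H^q}^q=\int_{\partial\mathbb{D}}|\psi|^q|f\circ\varphi|^q\,d\sigma$ is not valid for an arbitrary analytic $\psi$: an analytic function can have boundary values in $L^q(\partial\mathbb{D})$ without belonging to $H^q$. Concretely, $\psi(z)=\exp\bigl((1+z)/(1-z)\bigr)$ has unimodular radial limits almost everywhere, so with $\varphi(z)=z$ and $p=q$ the integral condition of the theorem holds with supremum $1$, yet $W_{\varphi,\psi}1=\psi\notin H^q$; thus the sufficiency implication, for the statement read literally with arbitrary analytic $\psi$, is false, and any proof of it must break somewhere --- yours breaks at this identity. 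In the necessity direction there is no problem, since boundedness guarantees $W_{\varphi,\psi}f\in H^q$ and members of $H^q$ do realize their norm as the $L^q$ norm of their boundary values. But in the sufficiency direction, where the identity carries all the weight, the passage from ``boundary integral finite'' to ``$H^q$ norm finite'' is legitimate only inside the Smirnov class $N^+$: one needs $\psi\in N^+$ (equivalently, in view of the $a=0$ case of the hypothesis and Smirnov's theorem, $\psi\in H^q$), after which $f\circ\varphi\in H^p\subset N^+$ by Littlewood's subordination principle, the product $\psi\cdot(f\circ\varphi)$ lies in $N^+$, and Smirnov's theorem (Theorem 2.11 in \cite{duren}) converts the bound on the boundary integral into the $H^q$ bound. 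This hypothesis on $\psi$ is how the result must be read (it is anyway necessary for boundedness, by testing $f\equiv 1$), and it is harmless here since the paper only applies the theorem to weights arising as boundary functions of Hardy-space elements; but your argument needs this $N^+$ step inserted explicitly, otherwise it proves too much.
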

\begin{lemma} \label{L22}
  Let $1 \leq p < \infty$. Then $W_{\varphi, \psi} :   S \rightarrow H^p$ is bounded if and only if $\psi \in H^p$, where $S$ is $S_2^p$ or $S^p$.
\end{lemma}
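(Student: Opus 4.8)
The plan is to prove the two directions separately, the whole argument resting on one elementary fact: functions in $S$ are uniformly bounded on $\mathbb{D}$ with $\|f\|_\infty \le C\|f\|_S$. For $S = S_2^p$ this is precisely the case $k=0$ of \eqref{r21}, since $|f(z)| \le C\|f\|_{S_2^p}(1-|z|^2)^{1-1/p} \le C\|f\|_{S_2^p}$ because $1-1/p \ge 0$ when $p \ge 1$. For $S = S^p$ it is the continuity of the embedding $S^p \hookrightarrow A$ into the disc algebra recorded after Theorem 3.11 of \cite{duren}; concretely, for $p>1$ one integrates the growth estimate $|f'(w)| \le \|f'\|_{H^p}(1-|w|^2)^{-1/p}$ along the radius to $z$, and for $p=1$ one appeals to the cited theorem directly.

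For the sufficiency, assume $\psi \in H^p$ and take any $f \in S$. Since $\varphi(\mathbb{D}) \subset \mathbb{D}$, the bound above gives $|f(\varphi(re^{i\theta}))| \le \|f\|_\infty \le C\|f\|_S$ for every $r$ and $\theta$, so this factor pulls out of the defining integral:
\[
\frac{1}{2\pi}\int_0^{2\pi} |\psi(re^{i\theta})f(\varphi(re^{i\theta}))|^p\, d\theta \le C^p\|f\|_S^p\, \frac{1}{2\pi}\int_0^{2\pi} |\psi(re^{i\theta})|^p\, d\theta.
\]
Taking the supremum over $0<r<1$ yields $\|W_{\varphi,\psi}f\|_{H^p} \le C\|\psi\|_{H^p}\|f\|_S$, which is exactly the boundedness of $W_{\varphi,\psi}: S \to H^p$.

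For the necessity, I would test the operator on the constant function $f \equiv 1$, which lies in both $S_2^p$ and $S^p$ with $\|1\|_S = 1$. Because $W_{\varphi,\psi}1 = \psi$, boundedness forces $\|\psi\|_{H^p} = \|W_{\varphi,\psi}1\|_{H^p} \le \|W_{\varphi,\psi}\|\,\|1\|_S = \|W_{\varphi,\psi}\| < \infty$, so $\psi \in H^p$.

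The only genuinely delicate step is the uniform sup-norm control of $S^p$ functions at the endpoint $p=1$, where the naive radial integration of the derivative estimate diverges and one must instead use the continuous embedding $S^1 \hookrightarrow A$ (equivalently, the closed graph theorem applied to the two Banach-space inclusions). Once that bound is in hand, both implications are short and the rest is a routine manipulation of the Hardy-space integral means.
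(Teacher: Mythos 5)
Your proposal is correct and follows essentially the same route as the paper: sufficiency by pulling $\|f\|_\infty \le C\|f\|_S$ out of the integral means so that $\|W_{\varphi,\psi}f\|_{H^p} \le C\|\psi\|_{H^p}\|f\|_S$, and necessity by testing on the constant function $f \equiv 1$. The only difference is that you spell out the justification of the sup-norm bound (the $k=0$ case of \eqref{r21} for $S_2^p$, and the embedding $S^p \hookrightarrow A$ for $S^p$), which the paper simply uses without comment.
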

\begin{proof}
  Suppose that $\psi \in H^p$ and $f \in S$. Then
  \begin{align*}
    ||W_{\varphi, \psi}f||_{H^p} = & || \psi f \circ \varphi ||_{H^p} \leq ||f||_{\infty} ||\psi||_{H^p}
    \leq C ||f||_{S} ||\psi||_{H^p}.
  \end{align*}
So $W_{\varphi, \psi}$ is bounded from $S$ into $H^p$. For converse, using the function $f(z) =1 \in S$ we have
$$ ||W_{\varphi, \psi}|| \geq ||W_{\varphi, \psi}(1)||_{H^p} = ||\psi||_{H^p}.  $$
\end{proof}
\begin{theorem} \label{th23}
Let $1 \leq p < \infty$. Then the following conditions are equivalent:
\begin{itemize}
  \item [(a)] $W_{\varphi, \psi} :   S_2^p \rightarrow S_2^p$ is bounded.
  \item [(b)] $\psi \in S_2^p$ and the operators
$  W_{\varphi, 2 \psi' \varphi' + \psi \varphi''} : H^p \rightarrow H^p$ and $W_{\varphi, \psi \varphi'^2} :  H^p \rightarrow H^p$ are bounded.
  \item [(c)] $\psi \in S_2^p$ and
\begin{align*}
\sup_{a\in \mathbb{D}} \int_{\partial \mathbb{D}}   \frac{1-|a|^2}{|1-\overline{a} \varphi (w)|^2}  |2 \psi'(w) \varphi'(w) + \psi(w) \varphi''(w)|^p d\sigma (w) < &  \infty, \\
\sup_{a\in \mathbb{D}} \int_{\partial \mathbb{D}} \frac{1-|a|^2}{|1-\overline{a} \varphi (w)|^2}  |\psi (w)\varphi'^2 (w)|^p d\sigma (w) < & \infty.
\end{align*}
\end{itemize}
\end{theorem}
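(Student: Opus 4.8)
The plan is to split the equivalence into the easy half (b)$\Leftrightarrow$(c) and the substantial half (a)$\Leftrightarrow$(b). The first half is essentially a restatement: applying Theorem \ref{th22} with $q=p$ to the symbols $2\psi'\varphi'+\psi\varphi''$ and $\psi\varphi'^2$ turns the $H^p\to H^p$ boundedness of the two operators in (b) into precisely the two supremum integrals in (c), so nothing beyond Theorem \ref{th22} is needed there. All the real content sits in (a)$\Leftrightarrow$(b), which I would drive through the pointwise identity for $(W_{\varphi,\psi}f)''$ recorded just before the theorem.

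For (b)$\Rightarrow$(a) I would estimate $\|W_{\varphi,\psi}f\|_{S_2^p}=|W_{\varphi,\psi}f(0)|+|(W_{\varphi,\psi}f)'(0)|+\|(W_{\varphi,\psi}f)''\|_{H^p}$ summand by summand. The two point evaluations at $0$ are controlled by expanding $W_{\varphi,\psi}f(0)=\psi(0)f(\varphi(0))$ and $(W_{\varphi,\psi}f)'(0)=\psi'(0)f(\varphi(0))+\psi(0)\varphi'(0)f'(\varphi(0))$ and bounding $|f(\varphi(0))|,|f'(\varphi(0))|$ by \eqref{r21}. For the $H^p$-norm of the second derivative I split along the three terms: the first, $\psi''\,(f\circ\varphi)=W_{\varphi,\psi''}f$, is handled by Lemma \ref{L22} because $\psi\in S_2^p$ forces $\psi''\in H^p$; the middle term equals $W_{\varphi,2\psi'\varphi'+\psi\varphi''}(f')$ and the last equals $W_{\varphi,\psi\varphi'^2}(f'')$. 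Since $f\in S_2^p$ gives $f''\in H^p$ with $\|f''\|_{H^p}\leq\|f\|_{S_2^p}$ and $f'$ lies in the disc algebra with $\|f'\|_\infty\leq C\|f\|_{S_2^p}$ (so also $\|f'\|_{H^p}\leq C\|f\|_{S_2^p}$), the assumed $H^p\to H^p$ boundedness of the two operators closes the estimate.

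For (a)$\Rightarrow$(b) the membership $\psi\in S_2^p$ is immediate from $W_{\varphi,\psi}1=\psi$ applied to the constant $1\in S_2^p$. To extract the Carleson-type conditions I would test against the normalized kernels $f_a(z)=\big((1-|a|^2)/(1-\overline a z)^2\big)^{1/p}$, for which $\|f_a\|_{H^p}\approx 1$ and $|f_a(\varphi(w))|^p=(1-|a|^2)/|1-\overline a\varphi(w)|^2$. For the symbol $\psi\varphi'^2$ I would pick $F_a\in S_2^p$ with $F_a''=f_a$ and $F_a(0)=F_a'(0)=0$, so that $\|F_a\|_{S_2^p}=\|f_a\|_{H^p}\approx 1$. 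Plugging $F_a$ into the identity for $(W_{\varphi,\psi}F_a)''$ and isolating the last summand, the term $\psi''(F_a\circ\varphi)$ is bounded via Lemma \ref{L22}, and $(2\psi'\varphi'+\psi\varphi'')(F_a'\circ\varphi)$ is bounded because $F_a'\in S^p$ with $\|F_a'\|_{S^p}\approx1$ embeds into the disc algebra (hence $\|F_a'\|_\infty\leq C$), while $2\psi'\varphi'+\psi\varphi''\in H^p$ follows by testing $W_{\varphi,\psi}$ on $f(z)=z$ and subtracting the $H^p$ term $\psi''\varphi$. What survives is $\|\psi\varphi'^2(f_a\circ\varphi)\|_{H^p}\leq C$ uniformly in $a$, which is exactly the second condition in (c).

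The main obstacle is the first condition in (c), i.e. the full $H^p\to H^p$ boundedness of the middle operator $W_{\varphi,2\psi'\varphi'+\psi\varphi''}$. Testing on $f(z)=z$ only delivers $2\psi'\varphi'+\psi\varphi''\in H^p$, which by Lemma \ref{L22} is the \emph{weaker} statement that this operator maps $S^p\to H^p$. To reach the genuine Carleson condition one wants unit-norm test functions $G_a\in S_2^p$ whose \emph{first} derivative reproduces the kernel, $|G_a'(\varphi(w))|^p\approx(1-|a|^2)/|1-\overline a\varphi(w)|^2$; but \eqref{r21} with $k=1$ (equivalently $S_2^p\hookrightarrow S^p\hookrightarrow A$) caps $\|G_a'\|_\infty$ by $C\|G_a\|_{S_2^p}$, so a normalized $G_a$ cannot make $G_a'\circ\varphi$ peak like the kernel as $|a|\to1$. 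Overcoming this is the delicate step: I would either build a sharper $S_2^p$-adapted family that still forces kernel growth of $G_a'$ along the range of $\varphi$, or derive the middle condition indirectly from the already-established condition on $\psi\varphi'^2$ together with $2\psi'\varphi'+\psi\varphi''\in H^p$ and the regularity of $\psi$. This separation of the middle summand from the pure second-derivative summand is where I expect essentially all of the difficulty to concentrate.
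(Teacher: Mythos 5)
Most of your proposal coincides with the paper's argument and is sound: the equivalence (b)$\Leftrightarrow$(c) via Theorem \ref{th22} with $q=p$, the summand-by-summand estimate for (b)$\Rightarrow$(a), and, inside (a)$\Rightarrow$(b), the facts $\psi\in S_2^p$ (test $f=1$) and $2\psi'\varphi'+\psi\varphi''\in H^p$ (test $f=z$). For the condition on $\psi\varphi'^2$ your route differs mildly from the paper's: you test against the kernels $f_a(z)=\bigl((1-|a|^2)/(1-\overline a z)^2\bigr)^{1/p}$ through their double antiderivatives $F_a\in S_2^p$ and read off the second line of (c) from $\|\psi\varphi'^2(f_a\circ\varphi)\|_{H^p}\le C$, whereas the paper takes an \emph{arbitrary} $f\in H^p$, chooses $g$ with $g''=f$ and $g(0)=g'(0)=0$ (so $\|g\|_{S_2^p}=\|f\|_{H^p}$ and $\|g\|_\infty+\|g'\|_\infty\le C\|f\|_{H^p}$), and rearranges the identity for $(\psi(g\circ\varphi))''$ to get $H^p\to H^p$ boundedness of $W_{\varphi,\psi\varphi'^2}$ directly. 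Both versions are correct and are interchangeable through Theorem \ref{th22}.

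The genuine gap is exactly where you locate it: your proposal never establishes the first condition in (c), i.e. the $H^p\to H^p$ boundedness of $W_{\varphi,2\psi'\varphi'+\psi\varphi''}$; you name two possible strategies (a sharper test family, or an indirect derivation) and execute neither, so as written (a)$\Rightarrow$(b) is incomplete. Your obstacle analysis is accurate: by \eqref{r21} with $k=1$, any unit-norm family in $S_2^p$ has uniformly bounded first derivatives, so no test functions can make $G_a'\circ\varphi$ peak like the kernel; and the anti-differentiation trick is not symmetric in the three summands --- taking $g'=f$ forces $g''=f'$, which need not lie in $H^p$, or equivalently isolating the middle summand leaves the error term $\psi\varphi'^2(f'\circ\varphi)$, controlled only by $\|f'\|_{H^p}$, not $\|f\|_{H^p}$. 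What the direct argument yields from (a) is only $\|(2\psi'\varphi'+\psi\varphi'')(h\circ\varphi)\|_{H^p}\le C\|h\|_{S^p}$ for $h\in S^p$, which by Lemma \ref{L22} amounts to the membership $2\psi'\varphi'+\psi\varphi''\in H^p$ --- in general strictly weaker than the Carleson condition (for $\varphi(z)=z$ the latter forces $\psi'\in H^\infty$, not merely $\psi'\in H^p$). You should know, however, that the paper does not close this gap either: it proves the claim only for $W_{\varphi,\psi\varphi'^2}$ and dismisses the middle operator with ``In a similar way we can prove\dots'', which fails for precisely the reason you identified. So your proposal stalls at the same point as the paper's own proof; unlike the paper, it makes the difficulty explicit, but in its present form it is not a complete proof of the theorem.
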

\begin{proof}
\textbf{$(b) \rightarrow (a)$:} Suppose that $\psi \in S_2^p$ and
$  W_{\varphi, 2 \psi' \varphi' + \psi \varphi''} : H^p \rightarrow H^p$ and $W_{\varphi, \psi \varphi'^2} :  H^p \rightarrow H^p$ be bounded. Let
$f \in S_2^p$. Then using the proof of Lemma \ref{L22} we obtain
\begin{align} \label{r22}
||(\psi f \circ \varphi)''||_{H^p} \leq & || \psi'' f \circ \varphi ||_{H^p} + || (2 \psi' \varphi' + \psi \varphi'') f' \circ \varphi ||_{H^p}
+ || \psi \varphi'^2 f'' \circ \varphi||_{H^p} \nonumber \\
\leq & C_1 ||\psi''||_{H^p} ||f||_{S_2^p} + C_2 ||f'||_{H^p} + C_3  ||f''||_{H^p} \nonumber \\
\leq &  C_1 ||\psi''||_{H^p} ||f||_{S_2^p} + C_2 ||f||_{S^p} + C_3  ||f||_{S_2^p} \nonumber \\
\leq & C_1 ||\psi''||_{H^p} ||f||_{S_2^p} + C_2 C_p ||f||_{S_2^p} + C_3  ||f||_{S_2^p}.
\end{align}
Also we have
\begin{align} \label{r23}
|(\psi f \circ \varphi)(0)| + |(\psi f \circ \varphi)'(0)| = & |\psi(0) f(\varphi(0))| + |\psi'(0) f (\varphi(0)) + \psi(0) \varphi'(0) f'(\varphi(0))| \nonumber \\
\leq & C \frac{|\psi(0)| \|f\|_{S_2^p}}{(1-|\varphi(0)|^2)^{-1 + 1/p}} + C \frac{|\psi'(0)| \|f\|_{S_2^p}}{(1-|\varphi(0)|^2)^{-1 + 1/p}} \nonumber\\
& + C \frac{|\psi(0) \varphi'(0)| \|f\|_{S_2^p}}{(1-|\varphi(0)|^2)^{1/p}}.
\end{align}
From \eqref{r22} and \eqref{r23} we get the desired result.

\textbf{$(a) \rightarrow (b)$:} Suppose that $W_{\varphi, \psi} :   S_2^p \rightarrow S_2^p$ be a bounded operator. Consider the function $f(z) =1 \in S_2^p$. Then
$$ ||W_{\varphi, \psi}|| \geq ||W_{\varphi, \psi}(1)||_{S_2^p} = ||\psi||_{S_2^p}.$$
So $\psi \in S_2^p$. Consider the function $f(z) = z \in  S_2^p$. It can be easily obtained that $\psi \varphi \in S_2^p$ or
$\psi'' \varphi + 2 \psi' \varphi' + \psi \varphi'' \in H^p$. Therefore $2 \psi' \varphi' + \psi \varphi'' \in H^p$. Also replacing the function $f(z)= z^2$, one can get
$\psi \varphi'^2 \in H^p$.

Now we show that $W_{\varphi, \psi \varphi'^2} :  H^p \rightarrow H^p$ is bounded. Let $f \in H^p$. There exists a function $g \in S_2^p$ such that
$f = g''$. Hence
\begin{align*}
|| W_{\varphi, \psi \varphi'^2} f & ||_{H^p} =  ||\psi \varphi'^2 f \circ \varphi ||_{H^p} \\
=& ||\psi \varphi'^2 f \circ \varphi  + (2 \psi' \varphi' + \psi \varphi'') g' \circ \varphi + \psi'' g \circ \varphi
- (2 \psi' \varphi' + \psi \varphi'') g' \circ \varphi - \psi'' g \circ \varphi ||_{H^p} \\
\leq & || W_{\varphi, \psi} g ||_{S_2^p} +  ||(2 \psi' \varphi' + \psi \varphi'') g' \circ \varphi||_{H^p} + ||\psi'' g \circ \varphi ||_{H^p} \\
\leq & C_1 ||g||_{S_2^p} + C_2 ||2 \psi' \varphi' + \psi \varphi''||_{H^p} ||g'||_{\infty} + C_3  ||\psi''||_{H^p} ||g||_{\infty} \\
\leq & C_1 ||f||_{H^p} + C_2 ||2 \psi' \varphi' + \psi \varphi''||_{H^p} ||f||_{H^p} + C_3  ||\psi''||_{H^p} ||f||_{H^p}
\end{align*}
In a similar way we can prove that $  W_{\varphi, 2 \psi' \varphi' + \psi \varphi''} : H^p \rightarrow H^p$  is a bounded operator.

The equivalency of $(b)$ and $(c)$ comes from  Theorem \ref{th22}.
\end{proof}
The following corollaries can be obtained from Theorem \ref{th23}.
\begin{corollary}
  Let $1 \leq p < \infty$. Then the following conditions are equivalent:
\begin{itemize}
  \item [(a)] $C_{\varphi} :   S_2^p \rightarrow S_2^p$ is bounded.
  \item [(b)] The operators
$W_{\varphi, \varphi''} : H^p \rightarrow H^p$ and $W_{\varphi, \varphi'^2} :  H^p \rightarrow H^p$ are bounded.
  \item [(c)]
  $$\sup_{a\in \mathbb{D}} \int_{\partial \mathbb{D}} \frac{1-|a|^2}{|1-\overline{a} \varphi (w)|^2}  |\varphi''(w)|^p d\sigma (w) < \infty,$$
  $$\sup_{a\in \mathbb{D}} \int_{\partial \mathbb{D}}  \frac{1-|a|^2}{|1-\overline{a} \varphi (w)|^2}  |\varphi'^2 (w)|^p d\sigma (w) < \infty,$$
\end{itemize}
\end{corollary}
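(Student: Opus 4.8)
The plan is to apply Theorem \ref{th23} directly, exploiting the fact that the composition operator is the weighted composition operator with constant weight one, namely $C_\varphi = W_{\varphi, 1}$. First I would set $\psi \equiv 1$ in Theorem \ref{th23} and compute the data entering conditions (b) and (c) for this particular choice of weight.

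With $\psi \equiv 1$ one has $\psi' = \psi'' = 0$, so the two weights governing the second-derivative expansion simplify to
\[
2\psi'\varphi' + \psi\varphi'' = \varphi'', \qquad \psi\varphi'^2 = \varphi'^2.
\]
Moreover, the hypothesis $\psi \in S_2^p$ reduces to the trivial statement $1 \in S_2^p$, which holds because $\|1\|_{S_2^p} = |1(0)| + |1'(0)| + \|1''\|_{H^p} = 1 + 0 + 0 = 1 < \infty$. Consequently, condition (b) of Theorem \ref{th23} collapses exactly to the boundedness of the two operators $W_{\varphi, \varphi''} : H^p \rightarrow H^p$ and $W_{\varphi, \varphi'^2} : H^p \rightarrow H^p$, which is precisely condition (b) of the corollary.

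Finally, substituting these same weights into the two Carleson-type integral conditions (c) of Theorem \ref{th23} produces the two suprema displayed in part (c) of the corollary, while the equivalence $(b) \Leftrightarrow (c)$ is inherited verbatim from Theorem \ref{th22}. Since nothing is required beyond verifying that the constant weight lies in $S_2^p$ and performing the substitution $\psi \equiv 1$, there is no genuine obstacle here: the corollary is simply the specialization of Theorem \ref{th23} to the constant weight.
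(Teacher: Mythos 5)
Your proposal is correct and is exactly the paper's intended argument: the paper states the corollary "can be obtained from Theorem \ref{th23}," meaning precisely the specialization $C_\varphi = W_{\varphi,1}$, under which $\psi' = \psi'' = 0$ reduces the weights to $\varphi''$ and $\varphi'^2$ and the condition $\psi \in S_2^p$ becomes trivial. Nothing further is needed.
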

\begin{corollary}
  Let $1 \leq p < \infty$. Then the following conditions are equivalent:
\begin{itemize}
  \item [(a)] $M_{\psi} :   S_2^p \rightarrow S_2^p$ is bounded.
  \item [(b)] $\psi \in S_2^p$ and the operators
$M_{2 \psi'} : H^p \rightarrow H^p$ and $M_{\psi} :  H^p \rightarrow H^p$ are bounded.
  \item [(c)] $\psi \in S_2^p$ and
  $$\sup_{a\in \mathbb{D}} \int_{\partial \mathbb{D}} \frac{1-|a|^2}{|1-\overline{a} w|^2}  |2 \psi'(w)|^p d\sigma (w) < \infty,$$
  $$\sup_{a\in \mathbb{D}} \int_{\partial \mathbb{D}}  \frac{1-|a|^2}{|1-\overline{a} w|^2}  |\psi(w)|^p d\sigma (w) < \infty,$$
\end{itemize}
\end{corollary}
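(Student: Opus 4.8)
The plan is to recognize the multiplication operator $M_\psi$ as the weighted composition operator attached to the identity self-map, and then specialize Theorem \ref{th23}. Concretely, set $\varphi(z)=z$, which is an analytic self-map of $\mathbb{D}$, so that all hypotheses of Theorem \ref{th23} are in force. For every $f\in H(\mathbb{D})$ one has $M_\psi f(z)=\psi(z)f(z)=\psi(z)f(\varphi(z))=W_{\varphi,\psi}f(z)$, hence $M_\psi=W_{\varphi,\psi}$ with this choice of $\varphi$. In particular, boundedness of $M_\psi:S_2^p\to S_2^p$ is literally condition (a) of Theorem \ref{th23} for this $\varphi$, and the requirement $\psi\in S_2^p$ carries over unchanged.

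The only real work is the elementary substitution $\varphi'\equiv 1$, $\varphi''\equiv 0$ into the auxiliary symbols. This yields
$$2\psi'\varphi' + \psi\varphi'' = 2\psi', \qquad \psi\varphi'^2 = \psi,$$
so the two auxiliary operators $W_{\varphi,2\psi'\varphi'+\psi\varphi''}:H^p\to H^p$ and $W_{\varphi,\psi\varphi'^2}:H^p\to H^p$ of Theorem \ref{th23}(b) collapse exactly to $M_{2\psi'}:H^p\to H^p$ and $M_\psi:H^p\to H^p$. This gives (a)$\iff$(b). For the passage to (c) I would substitute $\varphi(w)=w$ into the two Carleson-type suprema of Theorem \ref{th23}(c): the kernel $\frac{1-|a|^2}{|1-\overline a\varphi(w)|^2}$ becomes $\frac{1-|a|^2}{|1-\overline a w|^2}$ and the symbols reduce as above, reproducing verbatim the two integral conditions displayed in the corollary. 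Since (a), (b), (c) are mutually equivalent in Theorem \ref{th23}, their specializations are mutually equivalent as well.

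There is essentially no genuine obstacle here: the statement is a direct corollary obtained by evaluating Theorem \ref{th23} at $\varphi=\mathrm{id}$. The only points to verify are the trivial identities $\varphi'\equiv 1$ and $\varphi''\equiv 0$, the identification $M_\psi=W_{\mathrm{id},\psi}$, and the fact that the condition $\psi\in S_2^p$ is preserved in each of (b) and (c). No new estimates, test functions, or measure-theoretic arguments beyond those already established in Theorem \ref{th23} are needed.
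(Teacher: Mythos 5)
Your proposal is correct and is exactly how the paper obtains this corollary: the paper states it as an immediate consequence of Theorem \ref{th23}, namely the specialization $\varphi=\mathrm{id}$, under which $W_{\varphi,\psi}=M_\psi$, $2\psi'\varphi'+\psi\varphi''=2\psi'$, $\psi\varphi'^2=\psi$, and the Carleson kernel becomes $\frac{1-|a|^2}{|1-\overline{a}w|^2}$. Nothing further is needed.
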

\section{\sc\bf Compactness of $W_{\varphi, \psi} :   S_2^p \rightarrow S_2^p$ }
In this section, we investigate compactness of $W_{\varphi, \psi} :   S_2^p \rightarrow S_2^p$. Similar to the boundedness,
the compactness is also related to the compactness of the operators
\begin{align*}
 W_{\varphi, \psi''} : &  S_2^p \rightarrow H^p  \\
 W_{\varphi, 2 \psi' \varphi' + \psi \varphi''} : &  S^p \rightarrow H^p \\
 W_{\varphi, \psi \varphi'^2} : &  H^p \rightarrow H^p,
\end{align*}
and again we replace $S_2^p$ and $S^p$ by $H^p$. Contreras and Hern$\acute{a}$ndez-D$\acute{i}$az \cite{con1} proved that
the inclusion operator $j_p: S^p \hookrightarrow A $ is compact if and only if $1 < p \leq \infty$ and
the inclusion operator from $S^1$ into $H^1$ is compact. Since $S_2^p$ is a subspace of $S^p$, the inclusion operator $j_p: S_2^p \hookrightarrow A $ is compact if and only if $1 < p \leq \infty$ and
the inclusion operator from $S_2^1$ into $H^1$ is compact.
\begin{theorem}
Let $1 \leq p < \infty$. Then the following conditions are equivalent:
\begin{itemize}
  \item [(a)] $W_{\varphi, \psi} :   S_2^p \rightarrow S_2^p$ is compact.
  \item [(b)] The operators $W_{\varphi, \psi''}, W_{\varphi, 2 \psi' \varphi' + \psi \varphi''}, W_{\varphi, \psi \varphi'^2} :  H^p \rightarrow H^p$
 are compact.
\end{itemize}
\end{theorem}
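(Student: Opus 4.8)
The plan is to run both implications through the standard normal-families criterion for compactness. Since norm-bounded subsets of $S_2^p$ and of $H^p$ are normal families and the point evaluations $f\mapsto f^{(j)}(z)$ are continuous on these spaces, a bounded operator between them is compact if and only if it sends every bounded sequence that tends to $0$ uniformly on compact subsets of $\mathbb{D}$ to a norm-null sequence. First I would record this criterion for both $W_{\varphi,\psi}:S_2^p\to S_2^p$ and $W_{\varphi,\sigma}:H^p\to H^p$, together with the Cauchy-estimate remark that if $f_n\to 0$ locally uniformly then so do $f_n'$ and $f_n''$.

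For $(b)\Rightarrow(a)$ I would take $(f_n)$ bounded in $S_2^p$ with $f_n\to 0$ locally uniformly. The inequalities $\|f_n\|_{H^p}\le\|f_n\|_\infty\le C\|f_n\|_{S_2^p}$, $\|f_n'\|_{H^p}\le\|f_n\|_{S^p}\le C_p\|f_n\|_{S_2^p}$ and $\|f_n''\|_{H^p}\le\|f_n\|_{S_2^p}$ show that $(f_n),(f_n'),(f_n'')$ are bounded in $H^p$ and locally-uniformly null. Feeding them into the three compact $H^p\to H^p$ operators and using
\[
(W_{\varphi,\psi}f_n)''=\psi''(f_n\circ\varphi)+(2\psi'\varphi'+\psi\varphi'')(f_n'\circ\varphi)+\psi\varphi'^2(f_n''\circ\varphi)
\]
forces $\|(W_{\varphi,\psi}f_n)''\|_{H^p}\to 0$, while $(W_{\varphi,\psi}f_n)(0)=\psi(0)f_n(\varphi(0))\to 0$ and $(W_{\varphi,\psi}f_n)'(0)=\psi'(0)f_n(\varphi(0))+\psi(0)\varphi'(0)f_n'(\varphi(0))\to 0$ by local uniform convergence at the interior point $\varphi(0)$. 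Hence $\|W_{\varphi,\psi}f_n\|_{S_2^p}\to 0$, and this argument is uniform in $1\le p<\infty$.

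For $(a)\Rightarrow(b)$ I would first isolate the top-order operator $W_{\varphi,\psi\varphi'^2}$ by a lifting trick mirroring the boundedness proof of Theorem \ref{th23}. Given $(h_n)$ bounded in $H^p$ and locally-uniformly null, set $g_n(z)=\int_0^z\int_0^w h_n(u)\,du\,dw$, so $g_n''=h_n$, $g_n(0)=g_n'(0)=0$, $\|g_n\|_{S_2^p}=\|h_n\|_{H^p}$, and $g_n\to 0$ locally uniformly. Compactness of $W_{\varphi,\psi}$ gives $\|W_{\varphi,\psi}g_n\|_{S_2^p}\to 0$, hence $\|(W_{\varphi,\psi}g_n)''\|_{H^p}\to 0$, and the two lower-order pieces are discarded as follows. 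Since $g_n'$ is bounded in $S^p$ and locally-uniformly null, $\|g_n'\|_{H^p}\to 0$ by compactness of the inclusion $S^p\hookrightarrow H^p$ (valid for all $1\le p<\infty$, using $S^p\hookrightarrow A$ for $p>1$ and the Contreras--Hern\'andez-D\'iaz result $S^1\hookrightarrow H^1$ at $p=1$); because $W_{\varphi,\psi}$ bounded already yields $W_{\varphi,2\psi'\varphi'+\psi\varphi''}:H^p\to H^p$ bounded via Theorem \ref{th23}, the middle term is $\le\|W_{\varphi,2\psi'\varphi'+\psi\varphi''}\|\,\|g_n'\|_{H^p}\to 0$. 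Moreover $\|g_n\|_{S^p}=\|g_n'\|_{H^p}\to 0$ gives $\|g_n\|_\infty\le C\|g_n\|_{S^p}\to 0$, so the first term $\|\psi''(g_n\circ\varphi)\|_{H^p}\le\|\psi''\|_{H^p}\|g_n\|_\infty\to 0$. Thus $\|W_{\varphi,\psi\varphi'^2}h_n\|_{H^p}\to 0$, so the top operator is compact, again for every $1\le p<\infty$.

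The main obstacle is the remaining two operators $W_{\varphi,\psi''}$ and $W_{\varphi,2\psi'\varphi'+\psi\varphi''}$ \emph{as maps $H^p\to H^p$}. The lifting device couples the three jets $g,g',g''$ of a single $S_2^p$-function, so any attempt to place an arbitrary $h\in H^p$ into the first-derivative (or zeroth) slot forces its derivative into $H^p$, which fails for general $h$; equivalently, the $S_2^p$-norm is essentially governed by the second derivative, so a family normalized in $S_2^p$ whose first derivative reproduces the Čučković--Zhao kernel $f_a$ of Theorem \ref{th22} carries an extra factor $(1-|a|^2)$ that suppresses precisely the lower-order information one wants to read off. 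My intended route around this is to bypass lifting for these two symbols and instead verify their little-oh Carleson conditions directly, invoking a compactness analogue of Theorem \ref{th22} (namely $W_{\varphi,\sigma}:H^p\to H^p$ is compact iff $\sigma\in H^p$ and $\lim_{|a|\to 1}\int_{\partial\mathbb{D}}\frac{1-|a|^2}{|1-\overline{a}\varphi(w)|^2}|\sigma(w)|^p\,d\sigma(w)=0$) and extracting each little-oh condition from the compactness of $W_{\varphi,\psi}$ by testing against order-adapted normalized families. Designing test families that genuinely probe the first- and middle-order symbols while remaining bounded in $S_2^p$ is the delicate heart of the argument, and is also where I would expect the sharpest $p$-dependence to appear, through the failure of the inclusion $S_2^1\hookrightarrow A$ to be compact at the endpoint $p=1$.
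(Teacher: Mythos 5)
Your proof of $(b)\Rightarrow(a)$ and your lifting argument for the compactness of $W_{\varphi,\psi\varphi'^2}:H^p\to H^p$ are correct, and they coincide with what the paper actually proves (your way of discarding the two lower-order terms, via compactness of the inclusion $S^p\hookrightarrow H^p$ together with the boundedness of $W_{\varphi,2\psi'\varphi'+\psi\varphi''}$ on $H^p$ supplied by Theorem \ref{th23}, is cleaner and more uniform in $p$ than the paper's, which splits into $p>1$, handled through $S^p\hookrightarrow A$, and $p=1$, handled by dominated convergence). But your proposal stops there: compactness of $W_{\varphi,\psi''}$ and of $W_{\varphi,2\psi'\varphi'+\psi\varphi''}$ as operators on $H^p$ is never established. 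The final paragraph is a plan, not a proof: the ``order-adapted normalized families'' that would extract the little-oh Carleson conditions for these two symbols are never constructed, and the compactness analogue of Theorem \ref{th22} you invoke is, in the cited literature, only stated for $1<p\le q<\infty$, so even the tool is missing at $p=1$. As it stands, $(a)\Rightarrow(b)$ is only one-third proved; that is a genuine gap.

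That said, your diagnosis of the obstruction is exactly right, and it should be recorded that the paper does not fill this gap either: its proof treats only $W_{\varphi,\psi\varphi'^2}$ and dismisses the other two operators with ``the others are similar.'' They are not. The lifting device produces from $h\in H^p$ a function $g$ with $g''=h$; to place an arbitrary $h\in H^p$ in the zeroth or first slot one would need $h\in S_2^p$, respectively $h'\in H^p$, which fails for general $h$, and dilation-type approximations destroy the uniform bound on $\|g\|_{S_2^p}$ that compactness of $W_{\varphi,\psi}$ requires. Worse, for $p>1$ the compact embeddings $S_2^p\hookrightarrow A$ and $S^p\hookrightarrow A$ show that the maps $f\mapsto\psi''\,(f\circ\varphi)$ and $f\mapsto(2\psi'\varphi'+\psi\varphi'')\,(f'\circ\varphi)$ are \emph{automatically} compact from $S_2^p$ into $H^p$ as soon as their symbols lie in $H^p$; consequently condition $(a)$ is equivalent to the weaker package [$\psi\in S_2^p$, $2\psi'\varphi'+\psi\varphi''\in H^p$, and $W_{\varphi,\psi\varphi'^2}$ compact on $H^p$], which carries no little-oh information about the two lower-order symbols. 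So the missing two-thirds of $(a)\Rightarrow(b)$ cannot be recovered by any soft argument of the lifting type: either one needs a genuinely new argument showing that this package forces the little-oh Carleson conditions for $\psi''$ and $2\psi'\varphi'+\psi\varphi''$ (which seems doubtful), or statement $(b)$ should be weakened, e.g.\ to compactness of $W_{\varphi,\psi''}:S_2^p\to H^p$ and $W_{\varphi,2\psi'\varphi'+\psi\varphi''}:S^p\to H^p$, which is what your argument, combined with the embedding remark above, actually does prove. In short: your proposal is incomplete, but the step you could not do is precisely the step the paper asserts without proof, and your suspicion about the statement itself is well founded.
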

\begin{proof}
\textbf{$(b) \rightarrow (a)$:} Assume that the operators in $(b)$ be compact and  $\{ f_n \}$ is a sequence in the unit ball of $S_2^p$ which converges to zero uniformly on compact subsets of $\mathbb{D}$. We show that $||W_{\varphi, \psi} f_n||_{S_2^p} \rightarrow 0$. Then
\begin{align} \label{r25}
||(\psi f_n \circ \varphi)''||_{H^p} \leq & || \psi'' f \circ \varphi ||_{H^p} + || (2 \psi' \varphi' + \psi \varphi'') f' \circ \varphi ||_{H^p}
+ || \psi \varphi'^2 f'' \circ \varphi||_{H^p} \nonumber \\
= & ||W_{\varphi, \psi''} f_n ||_{H^p} + ||W_{\varphi, 2 \psi' \varphi' + \psi \varphi''} f_n ||_{H^p} + ||W_{\varphi, \psi \varphi'^2 } f_n ||_{H^p} \rightarrow 0. \nonumber
\end{align}
On the other hand, since uniformly convergence on compact subsets implies pointwise convergence, so
$$ |(\psi f_n \circ \varphi)(0)| + |(\psi f_n \circ \varphi)'(0)| \rightarrow 0. $$
So
 $$||W_{\varphi, \psi} f_n||_{S_2^p} = |(\psi f_n \circ \varphi)(0)| + |(\psi f_n \circ \varphi)'(0)|  +
 ||(\psi f_n \circ \varphi)''||_{H^p}  \rightarrow 0. $$
\textbf{$(a) \rightarrow (b)$:} Suppose that $W_{\varphi, \psi} :   S_2^p \rightarrow S_2^p$ be compact. We just prove that $W_{\varphi, \psi \varphi'^2} :  H^p \rightarrow H^p$ is compact, the others are similar. Let $\{ f_n \}$ be a sequence in the unit ball of $H^p$ such that
which converges to zero uniformly on compact subsets of $\mathbb{D}$. There exists a sequence $\{ g_n \}$ in $S_2^p$ with $g_n'' = f_n$ and
$g_n \rightarrow 0$ uniformly on compact subsets of $\mathbb{D}$. So $|| W_{\varphi, \psi} g_n ||_{S_2^p} \rightarrow 0$. Hence
\begin{align*}
|| W_{\varphi, \psi \varphi'^2} f_n  ||_{H^p} = & ||\psi \varphi'^2 f_n \circ \varphi ||_{H^p} \\
\leq & || W_{\varphi, \psi} g_n ||_{S_2^p} +  ||(2 \psi' \varphi' + \psi \varphi'') g_n' \circ \varphi||_{H^p} + ||\psi'' g_n \circ \varphi ||_{H^p}.
\end{align*}
If $p>1$, noting that the inclusion operator $j_p: S_2^p, S^p \hookrightarrow A $ is compact, we have
$$ ||(2 \psi' \varphi' + \psi \varphi'') g_n' \circ \varphi||_{H^p} \leq || 2 \psi' \varphi' + \psi \varphi'' ||_{H^p} ||g_n'||_{A} \rightarrow 0 $$
and also
$$ ||\psi'' g_n \circ \varphi ||_{H^p} \leq || \psi'' ||_{H^p} ||g_n||_{A} \rightarrow 0. $$
If $p=1$, the compactness of the inclusion operator from $S_2^1$ into $H^1$  implies that $||g_n||_{H^1} \rightarrow 0$ and $||g_n'||_{H^1} \rightarrow 0$. So there exists a subsequence, say $\{ g_n \}$, such that $g_n (z) \rightarrow 0$ almost everywhere in $\mathbb{T}$. In particular, $|\psi'' (z) g_n (\varphi(z))| \rightarrow 0$ almost everywhere in $\mathbb{T}$. On the other hand
$$  |\psi'' (z) g_n (\varphi(z))| \leq ||g_n||_A |\psi''(z)| \leq
||j_1||_{S_2^1 \rightarrow H^1} ||g_n||_{S_2^1} |\psi''(z)| \leq
||j_1||_{S_2^1 \rightarrow H^1}  |\psi''(z)|, $$
for every $z \in \mathbb{T}$. So, by the dominated convergence theorem, $ ||\psi'' g_n \circ \varphi ||_{H^1} \rightarrow 0 $. In a similar way, we can prove that
$$ ||(2 \psi' \varphi' + \psi \varphi'') g_n' \circ \varphi||_{H^1}  \rightarrow 0. $$
From above equations, we can see that $|| W_{\varphi, \psi \varphi'^2} f_n  ||_{H^p} \rightarrow 0 $ and so $W_{\varphi, \psi \varphi'^2} :  H^p \rightarrow H^p$ is compact.
\end{proof}
For $p>1$, $\breve{C}$u$\breve{c}$kovi$\acute{c}$ and Zhao obtained that (among other results) if $1  < p \leq q < \infty$, then $W_{\varphi, \psi}: H^p\rightarrow H^q$ is compact if and only if
$$\limsup_{|a| \rightarrow 1} \int_{\partial \mathbb{D}} \left ( \frac{1-|a|^2}{|1-\overline{a} \varphi (w)|^2} \right )^{q/p} |\psi(w)|^q d\sigma (w) =0,$$
see \cite{cuk1}.
So, the conditions in the previous theorem are equivalent to
\begin{align*}
 \limsup_{|a| \rightarrow 1} \int_{\partial \mathbb{D}}  \frac{1-|a|^2}{|1-\overline{a} \varphi (w)|^2}  |\psi''(w)|^q d\sigma (w) = & 0 \\
 \limsup_{|a| \rightarrow 1} \int_{\partial \mathbb{D}} \frac{1-|a|^2}{|1-\overline{a} \varphi (w)|^2}  |2 \psi'(w) \varphi'(w) + \psi(w) \varphi''(w)|^q d\sigma (w) =  & 0 \\
 \limsup_{|a| \rightarrow 1} \int_{\partial \mathbb{D}}  \frac{1-|a|^2}{|1-\overline{a} \varphi (w)|^2}  |\psi(w) \varphi'^2 (w)|^q d\sigma (w) = & 0.
\end{align*}
\section{\sc\bf Boundedness of $T_g, I_g :   S_2^p \rightarrow S_2^p$ }
\begin{theorem}
  Let $1 \leq p < \infty$. Then $T_g:   S_2^p \rightarrow S_2^p$ is bounded if and only if $g \in S_2^p$.
\end{theorem}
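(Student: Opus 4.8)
The plan is to prove the two implications separately; the forward direction (necessity) is immediate from a single test function, while the converse (sufficiency) reduces to three elementary product estimates.

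I would begin by recording the relevant derivatives of $T_g f$. Differentiating the definition $T_g f(z) = \int_0^z f(w) g'(w)\,dw$ gives $(T_g f)(0) = 0$, $(T_g f)'(z) = f(z) g'(z)$ and $(T_g f)''(z) = f'(z) g'(z) + f(z) g''(z)$, so that
\[
\|T_g f\|_{S_2^p} = |f(0) g'(0)| + \|f' g' + f g''\|_{H^p}.
\]
For the necessity, I would test against the constant function $f \equiv 1$, for which $\|f\|_{S_2^p} = 1$ and $T_g(1) = g - g(0)$. Since $(T_g 1)'' = g''$, boundedness of $T_g$ forces $\|g''\|_{H^p} \le \|T_g(1)\|_{S_2^p} \le \|T_g\| < \infty$, and as $g$ is already analytic on $\mathbb{D}$ this is precisely the condition $g \in S_2^p$.

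For the sufficiency, assume $g \in S_2^p$. The key structural fact I would isolate first is that $g'$ is bounded: because $(g')' = g'' \in H^p$, the function $g'$ lies in $S^p$, hence in the disc algebra $A$, so $\|g'\|_\infty < \infty$; likewise $f \in S_2^p \subset S^p \subset A$ gives $\|f\|_\infty \le C\|f\|_{S_2^p}$. Granting these, I would estimate the three pieces in turn: the point term by $|f(0) g'(0)| \le |g'(0)|\,\|f\|_{S_2^p}$; the term $\|f' g'\|_{H^p} \le \|g'\|_\infty \|f'\|_{H^p} \le C_p \|g'\|_\infty \|f\|_{S_2^p}$, using $\|f'\|_{H^p} \le \|f\|_{S^p} \le C_p\|f\|_{S_2^p}$; and the term $\|f g''\|_{H^p} \le \|f\|_\infty \|g''\|_{H^p} \le C\|g\|_{S_2^p}\|f\|_{S_2^p}$. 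Adding these yields $\|T_g f\|_{S_2^p} \le C_g\|f\|_{S_2^p}$ with a constant depending only on $g$, which is the asserted boundedness.

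I expect the only genuine content — as opposed to routine bookkeeping — to be the observation that $g \in S_2^p$ forces $g'$ into the disc algebra; once the boundedness of both $f$ and $g'$ is in hand, each product estimate is merely the submultiplicativity of the $H^p$ norm against an $H^\infty$ factor, combined with the embeddings $S_2^p \subset S^p \subset A$ recorded in the introduction.
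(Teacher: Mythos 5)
Your proof is correct and follows essentially the same route as the paper: necessity via the test function $f \equiv 1$ (the paper computes $\|T_g(1)\|_{S_2^p} = \|g\|_{S_2^p} - |g(0)|$, you extract $\|g''\|_{H^p}$, which is equivalent), and sufficiency via the triangle inequality and $H^\infty$--$H^p$ product estimates resting on the embeddings $S_2^p \subset S^p \subset A$. The only (harmless) differences are that in the mixed term you estimate $\|f'g'\|_{H^p} \le \|g'\|_\infty \|f'\|_{H^p}$ where the paper uses the symmetric bound $\|f'\|_\infty \|g'\|_{H^p}$, and that you correctly retain the term $|(T_g f)'(0)| = |f(0)g'(0)|$, which the paper's displayed computation silently drops.
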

\begin{proof}
Suppose that $g \in S_2^p$. For any $f \in S_2^p$ we have
\begin{align*}
  ||T_g f||_{S_2^p} = & ||f' g' + f g''||_{H^p} \\
  \leq & ||f' g'||_{H^p} + ||f g''||_{H^p} \\
  \leq & ||g'||_{H^p} || f' ||_{\infty} + || g'' ||_{H^p} ||f||_{\infty} \\
  \leq & 2C ||g||_{S_2^p} ||f ||_{S_2^p}
\end{align*}
For the converse, consider the constant function $f(z) =1 \in S_2^p$. Then $$||T_g|| \geq ||T_g f||_{S_2^p} = ||g||_{S_2^p} - |g(0)|.$$
\end{proof}
\begin{theorem}
  Let $1 \leq p < \infty$. Then $I_g:   S_2^p \rightarrow S_2^p$ is bounded if and only if $g \in S^p$.
\end{theorem}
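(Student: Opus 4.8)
The plan is to mirror the proof of the preceding theorem for $T_g$, exploiting the fact that $I_g$ raises derivatives in a controlled way. The first step is to record the differentiation identities for $h=I_gf$. From $I_gf(z)=\int_0^z f'(w)g(w)\,dw$ one gets $h(0)=0$, $h'(z)=f'(z)g(z)$ so that $h'(0)=f'(0)g(0)$, and $h''(z)=f''(z)g(z)+f'(z)g'(z)$. Hence
\[
\|I_gf\|_{S_2^p}=|f'(0)g(0)|+\|f''g+f'g'\|_{H^p},
\]
and the whole problem reduces to controlling this last $H^p$-norm.

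For sufficiency, I would assume $g\in S^p$ and use the embeddings from the excerpt. Since $S^p\subset A\subset H^\infty$, the function $g$ is bounded and $g'\in H^p$. For $f\in S_2^p$ one has $f''\in H^p$ and, because $(f')'=f''\in H^p$, also $f'\in S^p\subset A$, so $f'\in H^\infty$ with $\|f'\|_\infty\le C\|f'\|_{S^p}=C(|f'(0)|+\|f''\|_{H^p})\le C\|f\|_{S_2^p}$. Then
\begin{align*}
\|f''g+f'g'\|_{H^p}\le \|g\|_\infty\|f''\|_{H^p}+\|f'\|_\infty\|g'\|_{H^p}\le C\|g\|_{S^p}\|f\|_{S_2^p},
\end{align*}
while the point-evaluation term is dominated by $|f'(0)|\,|g(0)|\le\|f\|_{S_2^p}\|g\|_\infty$. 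Adding these yields $\|I_gf\|_{S_2^p}\le C\|g\|_{S^p}\|f\|_{S_2^p}$, so $I_g$ is bounded.

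For necessity, the natural test function is $f(z)=z$, which lies in $S_2^p$ with $\|f\|_{S_2^p}=1$. For this $f$ the identities above collapse to $h(0)=0$, $h'(0)=g(0)$, $h''=g'$, so that $\|I_gf\|_{S_2^p}=|g(0)|+\|g'\|_{H^p}=\|g\|_{S^p}$. Boundedness of $I_g$ then forces $\|g\|_{S^p}\le\|I_g\|\,\|f\|_{S_2^p}=\|I_g\|<\infty$, i.e.\ $g\in S^p$.

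I do not expect any serious obstacle here: unlike the weighted composition case, no Carleson-type condition is needed because the extra derivative supplied by $I_g$ is absorbed by the disc-algebra embeddings $S_2^p,S^p\hookrightarrow A$ already available in the excerpt. The only place that requires a little care is the sufficiency estimate, where one must invoke the right embedding ($f'\in S^p$, hence $f'\in H^\infty$) rather than trying to bound $\|f'\|_\infty$ directly from $\|f\|_{S_2^p}$ through the growth estimate \eqref{r21}, which would only give a pointwise bound that blows up near the boundary.
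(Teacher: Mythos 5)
Your proof is correct and follows essentially the same route as the paper: sufficiency via the product rule, the triangle inequality, and the embeddings $S^p, S_2^p \hookrightarrow A$ to bound $\|g\|_\infty$ and $\|f'\|_\infty$, and necessity via the test function $f(z)=z$. In fact you are slightly more careful than the paper's own display, which omits the point-evaluation term $|f'(0)g(0)|$ in $\|I_g f\|_{S_2^p}$ that you correctly account for.
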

\begin{proof}
Suppose that $g \in S^p$. For any $f \in S_2^p$ we have
\begin{align*}
  ||I_g f||_{S_2^p} = & ||f'' g + f' g'||_{H^p} \\
  \leq & ||f'' g||_{H^p} + ||f' g'||_{H^p} \\
  \leq & ||f''||_{H^p} || g ||_{\infty} + || f' ||_{\infty} ||g'||_{H^p} \\
  \leq &  2 C ||f||_{S_2^p} ||g||_{S^p}
\end{align*}
For the converse, consider the constant function $f(z) =z \in S_2^p$. Then
$$||I_g|| \geq ||I_g f||_{S_2^p} = ||g||_{S^p}.$$
\end{proof}
We can change the condition $g \in S^p$ with $g \in S_2^p$ in the previous theorem.
\\
\\
\textbf{Acknowledgements}

The authors are very thankful to referees for reviewing the paper.
\newline
\\
\textbf{Funding}

Not applicable.
\\
\\
\textbf{Availability of data and materials}

Data sharing not applicable to this article.
\\
\\
\textbf{Declarations}
\\
\\
\textbf{Ethics approval and consent to participate}

Not applicable.
\\
\\
\textbf{Consent for publication}

Not applicable.
\\
\\
\textbf{Competing interests}

The authors declare that they have no competing interests.
\\
\\
\textbf{Authors’ contributions}

MH proposed the main idea of the paper and wrote the boundedness part. EA designed the compactness part of the operators. All authors read and approved the final manuscript.


\end{document}